\newtheorem{theorem}{Theorem}[section]
\newtheorem{lemma}[theorem]{Lemma}
\newcommand{\Gam}{\Gamma}
\newcommand{\dist}{\partial}
\newcommand{\lb}{\big|}
\begin{document}

\title{Sets of Complex Unit Vectors with Two Angles and Distance-Regular Graphs}
\author{Junbo Huang \\ \small{Department of Combinatorics and Optimization} \\ \small{University of Waterloo, Waterloo, Ontario, Canada} \\ \small{j26huang@uwaterloo.ca}}
\date{June 10, 2013}
\maketitle

\begin{abstract}
We study $\{0,\alpha\}$-sets, which are sets of unit vectors of $\mathbb{C}^m$ in which any two distinct vectors have angle 0 or $\alpha$. We investigate some distance-regular graphs that provide new constructions of $\{0,\alpha\}$-sets using a method by Godsil and Roy. We prove bounds for the sizes of $\{0,\alpha\}$-sets of flat vectors, and characterize all the distance-regular graphs that yield $\{0,\alpha\}$-sets meeting the bounds at equality.
\end{abstract}

\section{Introduction}

The \emph{angle} between two unit vectors $x$ and $y$ in $\mathbb{C}^m$ is defined to be the number $|x^*y|^2$, where $x^*$ is the conjugate transpose of $x$. For $\alpha \in \mathbb{R}$ with $0 < \alpha \leq 1$, a set of unit vectors in $\mathbb{C}^m$ is called a \emph{$\{0,\alpha\}$-set} if the angle between any two distinct vectors in the set is 0 or $\alpha$. The concept of $\{0,\alpha\}$-set is motivated by equiangular sets and mutually unbiased bases. A set of unit vectors in $\mathbb{C}^m$ is called \emph{equiangular} if the angle between any two distinct vectors in the set is the same. Two orthonormal bases of $\mathbb{C}^m$ or $\mathbb{R}^m$ are called \emph{unbiased} if the angle between any two vectors from different bases is a constant. Thus equiangular sets and sets of vectors from a collection of mutually unbiased bases are examples of $\{0,\alpha\}$-sets. Equiangular sets and mutually unbiased bases have various applications in quantum theory. For example, in quantum physics, it is desirable to recover the state of a physical system by means of measurements, and equiangular sets and mutually unbiased bases provide ``good'' measurements for such purposes. Because of the applications, equiangular sets and mutually unbiased bases have received much attention. Many constructions for such sets were found; see \cite{CCKS, GodRoy09, Hoggar98, KlapRott04, Konig99}, for examples. Constructions of general $\{0,\alpha\}$-sets, however, received little attention. 

In 2005, Godsil and Roy \cite{RoyThesis} presented a method for constructing sets of unit vectors using certain bipartite graphs. Their method can be used to construct $\{0,\alpha\}$-sets. A vector from a set constructed using Godsil and Roy's method is \emph{flat}, that is, all of its entries have the same absolute value. Flat vectors have natural connections to combinatorics. For example, the columns of a complex Hadamard matrix are flat. In this paper, we investigate some new constructions of flat $\{0,\alpha\}$-sets using Godsil and Roy's method, and study bounds related to such sets. 

We first present examples of distance-regular graphs, including an infinite family, that provide new constructions of $\{0,\alpha\}$-sets using Godsil and Roy's method. We then show that certain upper bounds for the sizes of $\{0,\alpha\}$-sets proven by Delsarte, Goethals and Seidel \cite{DGS} can be improved for sets of flat vectors. We will see that the 8-cycle, the 4-cube, the folded 8-cube, and the coset graph of the extended binary Golay code yield $\{0,\alpha\}$-sets that satisfy the improved bounds at equality. Finally, we show that these are the only distance-regular graphs that provide flat $\{0,\alpha\}$-sets of maximum size with respect to the improved bounds.

\section{Godsil and Roy's Construction}

In this section, we describe a method of constructing sets of unit vectors using certain bipartite graphs. The construction is due to Godsil and Roy, and appeared in Roy's PhD thesis \cite{RoyThesis}. Let $G$ be a group acting on a set $V$. We say that the group $G$ acts on $V$ \emph{regularly} if for every $a, b \in V$, there exists a unique $g \in G$ such that $a^g=b$. A \emph{character} of an abelian group $G$ is a group homomorphism from $G$ to the multiplicative group of complex numbers of norm $1$. It is well known that characters of a finite abelian group $G$ form a group of size $|G|$. For the outline of a proof, see \cite[Sect.~12.8]{AlgCombCG}.

In order to apply Godsil and Roy's construction, we need a connected bipartite graph $\Gam$ with colour classes $Y$ and $Z$ and an abelian group $G$ of automorphisms of $\Gam$ acting regularly on each of $Y$ and $Z$.
Let $u$ and $v$ be vertices of $\Gam$ from the same colour class, and let $h$ be the element in $G$ such that $u^h=v$. Since $h$ is an automorphism of $\Gam$, $h$ defines an injection from the set of neighbours of $u$ to the set of neighbours of $v$. Since the choice of $u$ and $v$ are arbitrary within a colour class, it follows that $\Gam$ must be regular. Let $k$ denote the valency of $\Gam$. Since $G$ acts on $Y$ and $Z$ regularly, we also have $|Y|=|G|=|Z|$. Let us use $n$ to denote the size of $G$, so $\Gam$ has $2n$ vertices.

To start, we pick two vertices $y \in Y$ and $z \in Z$, let
\[
	D := \big\{g \in G: z^g \text{ is adjacent to } y \text{ in } \Gam \big\},
\]
and let $S'$ be the set of characters of $G$ restricted to $D$. Note that $|D|=k$. Since there are $n$ characters of $G$, the set $S'$ is a subset of $\mathbb{C}^k$ and has size $n$. It is shown in \cite[Ch.~4]{RoyThesis} that
\[
	\big\{|x^*y|^2: x,y \in S', x \neq y \big\} = \big\{\lambda^2: \lambda \text{ is an eigenvalue of } \Gam, \lambda \neq \pm k \big\}.
\]
Therefore, by normalizing the vectors in $S'$, we get a set of unit vectors with angle set determined by the eigenvalues of $\Gam$. The construction is summarized in the following lemma.
\begin{lemma}[Godsil and Roy '05]
\label{constr}
Suppose that a connected bipartite graph $\Gam$ with colour classes $Y$ and $Z$ has an abelian group $G$ of automorphisms acting regularly on each of $Y$ and $Z$.
Let $y \in Y$ and $z \in Z$, and let
\[
	D := \big\{g \in G: z^g \text{ is adjacent to } y \text{ in } \Gam \big\}.
\]
Let $S'$ be the set of characters of $G$ restricted to $D$, and let 
\[
	S := \Big\{\frac{1}{\sqrt{k}}\chi: \chi \in S'\Big\},
\]
where $k$ is the valency of $\Gam$. If $2n$ is the number of vertices in $\Gam$, then $S$ is a set of $n$ unit vectors in $\mathbb{C}^k$, with set of angles
\[
	\Big\{\frac{\lambda^2}{k^2}: \lambda \text{ is an eigenvalue of } \Gamma, \lambda \neq \pm k \Big\}.
\]
\end{lemma}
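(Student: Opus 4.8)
The plan is to exploit the regular action of $G$ to identify both colour classes with the elements of $G$, and then to diagonalise the relevant matrices using the characters of $G$. First I would fix the base points $y\in Y$ and $z\in Z$ and write $Y=\{y^a:a\in G\}$ and $Z=\{z^b:b\in G\}$. Since each element of $G$ is an automorphism, a short check shows that $y^a$ is adjacent to $z^b$ precisely when $ba^{-1}\in D$; hence the $Y\times Z$ biadjacency matrix $B$ of $\Gamma$ has entries $B_{a,b}=1$ if $ba^{-1}\in D$ and $0$ otherwise, and the adjacency matrix of $\Gamma$ is the block matrix with off-diagonal blocks $B$ and $B^{T}$. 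Because $G$ is abelian, this ``group-circulant'' structure is exactly what the characters diagonalise. Note that $B$ is square, since $|Y|=|Z|=n$.

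Next, for a character $\chi$ of $G$ I would write $\widehat{\chi}(D):=\sum_{d\in D}\chi(d)$ and let $v_\chi$ be the vector on $G$ with entries $\chi(a)$. Using the substitution $d=ba^{-1}$ one gets $Bv_\chi=\widehat{\chi}(D)\,v_\chi$ and, symmetrically, $B^{T}v_\chi=\overline{\widehat{\chi}(D)}\,v_\chi$, so that $B^{T}Bv_\chi=|\widehat{\chi}(D)|^{2}v_\chi$. As $\chi$ ranges over the $n$ characters of $G$ the vectors $v_\chi$ are pairwise orthogonal and hence form a basis, so the eigenvalues of $B^{T}B$ are exactly the numbers $|\widehat{\chi}(D)|^{2}$. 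Since $\Gamma$ is bipartite with $B$ square, its eigenvalues are the numbers $\pm\lambda$ with $\lambda^{2}$ an eigenvalue of $B^{T}B$; therefore $\{\lambda^{2}:\lambda\text{ an eigenvalue of }\Gamma\}=\{|\widehat{\chi}(D)|^{2}:\chi\text{ a character of }G\}$ as sets.

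I would then examine the vectors of $S'$ directly. The restriction of $\chi$ to $D$ is $x_\chi=(\chi(d))_{d\in D}\in\mathbb{C}^{k}$, and for characters $\chi,\psi$ one computes $x_\chi^{*}x_\psi=\sum_{d\in D}\overline{\chi(d)}\psi(d)=\widehat{\chi^{-1}\psi}(D)$, using $\overline{\chi(d)}=\chi(d)^{-1}$ and the fact that $\chi^{-1}\psi$ is again a character. Taking $\chi=\psi$ gives $\|x_\chi\|^{2}=|D|=k$, so the scaled vectors in $S$ are indeed unit vectors of $\mathbb{C}^{k}$; and for $\chi\neq\psi$ the angle between the corresponding members of $S$ is $\tfrac{1}{k^{2}}|\widehat{\chi^{-1}\psi}(D)|^{2}$. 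As $\chi\neq\psi$ vary, $\phi:=\chi^{-1}\psi$ runs through all nontrivial characters of $G$, so the angle set of $S$ equals $\{|\widehat{\phi}(D)|^{2}/k^{2}:\phi\text{ nontrivial}\}$.

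Finally I would reconcile the two sides by tracking the trivial character $\chi_0$. It contributes $\widehat{\chi_0}(D)=|D|=k$, i.e. the value $k^{2}$, which on the graph side is $\lambda^{2}$ for $\lambda=\pm k$. The key point, and the step I expect to be the main obstacle, is to show that this is the \emph{only} source of the value $k^{2}$, so that deleting $\chi_0$ matches exactly deleting $\lambda=\pm k$. For this I would invoke connectivity: a connected $k$-regular bipartite graph has $k$ and $-k$ as simple eigenvalues, forcing $k^{2}$ to be a simple eigenvalue of $B^{T}B$ and hence to arise from no nontrivial character. Connectivity also gives $|S|=n$: it implies $D$ generates $G$, so a character trivial on $D$ is trivial on all of $G$; consequently distinct characters restrict to distinct vectors on $D$, yielding $n$ distinct members of $S'$ and of $S$. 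Combining the displayed set equalities with this matching of excluded values then gives the claimed angle set $\{\lambda^{2}/k^{2}:\lambda\neq\pm k\}$.
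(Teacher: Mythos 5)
Your proof is correct and complete. The paper itself does not prove this lemma: it sketches the setup and then cites Roy's thesis (\cite[Ch.~4]{RoyThesis}) for the key identity $\{|x^*y|^2: x,y\in S', x\neq y\}=\{\lambda^2:\lambda\neq\pm k\}$, so there is no in-paper argument to compare against line by line. Your character-theoretic diagonalisation of the group-circulant biadjacency matrix is the standard route to this identity and is surely the intended one. You also handle correctly the two points the paper glosses over entirely: (1) that $|S|=n$, i.e.\ distinct characters restrict to distinct vectors on $D$ (via connectivity forcing $D$ to generate $G$, so a character trivial on $D$ is trivial); and (2) that the trivial character is the \emph{only} character with $|\widehat{\chi}(D)|=k$, which is needed so that discarding $\chi_0$ corresponds exactly to discarding $\lambda=\pm k$ (via simplicity of the eigenvalue $k$ for a connected graph, transferred to $B^TB$). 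The only step you assert without detail is that connectivity implies $\langle D\rangle=G$; this follows by tracking an alternating walk from $y$ to an arbitrary $y^a$, which places $a$ in $(D^{-1}D)^j\subseteq\langle D\rangle$, and is routine to fill in.
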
\hfill \qedsymbol

In Lemma \ref{constr}, if all the non-identity elements of $G$ have order two, then the range of a character of $G$ is a subset of $\{1,-1\}$; in this case, the set $S$ constructed from $G$ is a subset of $\mathbb{R}^k$. 

When searching for graphs for which Godsil and Roy's construction can be applied, distance-regular graphs are natural candidates. In the next section, we review some basic facts about distance-regular graphs, focusing on those that are bipartite and have diameter four.

\section{Distance-Regular Graphs}

Let $\Gam$ be a graph of diameter $d$ and let $v$ be a vertex of $\Gam$. For any $i \in \mathbb{Z}$, let $\Gam_i(v)$ be the set of vertices of $\Gam$ that are at distance $i$ from $v$. Clearly $\Gam_i(v) = \emptyset$ if $i<0$ or $i>d$. The graph $\Gam$ is called \emph{distance-regular} if it is connected and, for any $i \in \{0,1,\dots,d\}$, the size of $\Gam_i(u) \cap \Gam_1(v)$ depends only on the distance between $u$ and $v$. Let $\dist$ denote the distance function of $\Gam$. If $\dist(u,v)=i$, we write
\[
	a_i := \lb\Gam_i(u) \cap \Gam_1(v)\lb, \quad b_i := \lb\Gam_{i+1}(u) \cap \Gam_1(v)\lb, \quad c_i := \lb\Gam_{i-1}(u) \cap \Gam_1(v)\lb.
\]
The numbers $a_i$, $b_i$ and $c_i$ are called the \emph{intersection numbers} of the distance-regular graph. Since $b_0$ is the valency of any vertex, a distance-regular graph must be regular; we use $k$ to denote the valency $b_0$ of the graph. Let $k_i(v)$ be the size of $\Gam_i(v)$. In particular, $k_0(v)=1$ for each vertex $v$. In fact, by double-counting the number of edges with one end in $\Gam_{i-1}(v)$ and the other in $\Gam_i(v)$, it follows that $k_i(v) c_i = k_{i-1}(v) b_{i-1}$, so inductively we see that $k_i(v)$ is independent of $v$, and we write $k_i$ for $k_i(v)$. The following lemma states some conditions that the intersection numbers satisfy.
\begin{lemma}
\label{feas}
If $b_0$, $b_1$, $\dots$, $b_{d-1}$, $c_1$, $c_2$, $\dots$, $c_d$ are intersection numbers of a distance-regular graph with valency $k$ and diameter $d$, then both of the following conditions hold:
\begin{itemize}
	\item[(i)] $k_i= k_{i-1} b_{i-1}/c_i$ for all $i \in \{1,2,\dots,d\}$.
	\item[(ii)] $1 = c_1 \leq c_2 \leq \dots \leq c_{d-1} \leq c_d \leq k$.
\end{itemize}
\end{lemma}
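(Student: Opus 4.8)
The plan is to establish part (i) directly from the double-counting identity already recorded in the text, and to prove the chain of inequalities in part (ii) by combining a few elementary observations with one injection argument built on a vertex lying on a shortest path.

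For (i), the excerpt has already observed that double-counting the edges between $\Gam_{i-1}(v)$ and $\Gam_i(v)$ yields $k_i c_i = k_{i-1} b_{i-1}$: each vertex of $\Gam_i(v)$ is incident with exactly $c_i$ such edges, while each vertex of $\Gam_{i-1}(v)$ is incident with exactly $b_{i-1}$ of them. Since $c_i \geq 1 > 0$ for every $i \in \{1,\dots,d\}$ (a vertex at distance $i$ from $v$ has a neighbour one step closer on a shortest path), I may divide by $c_i$ to obtain $k_i = k_{i-1} b_{i-1}/c_i$, so (i) is immediate.

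For (ii), I would treat the three parts of the chain separately. The equality $c_1 = 1$ follows by taking $u,v$ with $\dist(u,v)=1$ and noting that $\Gam_0(u) \cap \Gam_1(v) = \{u\} \cap \Gam_1(v) = \{u\}$, since $u$ is itself a neighbour of $v$. The final inequality $c_d \leq k$ follows because, for $u,v$ with $\dist(u,v)=d$, the set $\Gam_{d-1}(u) \cap \Gam_1(v)$ is a subset of $\Gam_1(v)$, whose size is $k = b_0$.

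The heart of the matter, and the step I expect to be the main obstacle, is the monotonicity $c_i \leq c_{i+1}$ for $1 \leq i \leq d-1$. The plan is to fix vertices $u,v$ with $\dist(u,v) = i+1$ and pick a neighbour $w$ of $u$ on a shortest $u$--$v$ path, so that $\dist(w,v) = i$. I would then show that every neighbour of $v$ at distance $i-1$ from $w$ is automatically at distance $i$ from $u$: if $x \in \Gam_{i-1}(w) \cap \Gam_1(v)$, then $\dist(u,x) \leq \dist(u,w) + \dist(w,x) = i$ by the triangle inequality, while $\dist(u,x) \geq \dist(u,v) - 1 = i$ since $x$ is adjacent to $v$, forcing $\dist(u,x)=i$. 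Hence $\Gam_{i-1}(w) \cap \Gam_1(v) \subseteq \Gam_i(u) \cap \Gam_1(v)$, and comparing cardinalities gives $c_i \leq c_{i+1}$, because distance-regularity guarantees the left-hand set has size $c_i$ (as $\dist(w,v)=i$) and the right-hand set has size $c_{i+1}$ (as $\dist(u,v)=i+1$). The only point requiring care is precisely this last appeal to distance-regularity, which is what lets me read off the two cardinalities solely from the distances $\dist(w,v)$ and $\dist(u,v)$.
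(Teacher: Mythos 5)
Your proof is correct, and it takes the same route as the paper where the paper gives one: part (i) reproduces exactly the double-counting of edges between $\Gam_{i-1}(v)$ and $\Gam_i(v)$ that the paper carries out in the paragraph preceding the lemma. For part (ii), which the paper does not prove but delegates to Biggs, your argument (the values $c_1=1$ and $c_d\leq k$ read off directly, plus the inclusion $\Gam_{i-1}(w)\cap\Gam_1(v)\subseteq\Gam_i(u)\cap\Gam_1(v)$ for a neighbour $w$ of $u$ on a shortest $u$--$v$ path, with the two cardinalities identified as $c_i$ and $c_{i+1}$ via distance-regularity) is precisely the standard textbook proof being cited, with all steps justified.
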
\hfill \qedsymbol

Condition (i) in the lemma has been explained before the lemma. For a proof of condition (ii) , see \cite[Ch.~20]{BiggsAG2}.

The intersection numbers determine many numerical data for the distance-regular graph. For example, from Lemma \ref{feas}, we see that the intersection numbers determine each $k_i$ and hence the number of vertices in the graph. The intersection numbers also determine the eigenvalues of the graph. In fact, the eigenvalues of a distance-regular graph are precisely those of the tridiagonal matrix
\[
B = \begin{bmatrix}
a_0 & c_1 & & & &\\
b_0 & a_1 & c_2 & & & \\
 & b_1 & a_2 & \ddots & & \\
 & & b_2 & \ddots & \ddots & \\
 & & & \ddots & \ddots & c_d \\
 & & & & b_{d-1} & a_d
\end{bmatrix}.
\]
For a proof of this fact, see \cite[Ch.~21]{BiggsAG2}. It is well known that any connected graph of diameter $d$ has at least $d+1$ distinct eigenvalues (see \cite[Ch.~2]{BiggsAG2}), and therefore, by the size of the matrix $B$, any distance-regular graph of diameter $d$ has exactly $d+1$ distinct eigenvalues. For the rest of this section, we focus on bipartite distance-regular graphs of diameter four. For more general theory of distance-regular graphs, we refer the readers to \cite{BCN}. 

Let $\Gam$ be a distance-regular graph with valency $k$. If $\dist(u,v)=i$ then any neighbour of $v$ must be at distance $i-1$, $i$ or $i+1$ from $u$. Consequently,
\[
	\lb \Gam_j(u) \cap \Gam_1(v) \lb = 0,
\]
for $j \notin \{i-1,i,i+1\}$, and so
\[
	a_i + b_i + c_i = k.
\]
If $\Gam$ is bipartite then for any $i$, all vertices in $\Gam_i(v)$ must be in the same colour class, so $a_i=0$ for all $i$, whence
\[
	b_i = k - c_i.
\]
Therefore, for a bipartite distance-regular graph of diameter four, the numbers $k$, $c_2$ and $c_3$ determine all the intersection numbers for the graph. In fact, a bipartite graph of diameter four with $k$, $c_2$ and $c_3$ well-defined is automatically distance-regular. From now on, we use $k$, $c_2$ and $c_3$ to represent all the intersection numbers for a bipartite distance-regular graph of diameter four. For such a graph, the matrix $B$ above becomes
\[
	\begin{bmatrix} 0&1&0&0&0 \\ k&0&c_2&0&0 \\ 0&k-1&0&c_3&0 \\ 0&0&k-c_2&0&k \\ 0&0&0&k-c_3&0 \end{bmatrix}
\]
and has eigenvalues $\pm k$, $\pm \theta_1$ and 0, where $\theta_1 = \sqrt{k + c_2 (k-c_3-1)}$. We call $\theta_1$ the \emph{nontrivial} eigenvalue for a bipartite distance-regular graph of diameter four. For such a graph, Brouwer, Cohen and Neumaier \cite[Thm~5.4.1, p.173]{BCN} proved a restriction on $c_2$ and $c_3$.
\begin{lemma}
\label{c2bound}
Suppose that $c_2$ and $c_3$ are intersection numbers of a distance-regular graph of diameter at least four. If $c_2>1$ then $c_3 \geq 3c_2/2$. 
\end{lemma}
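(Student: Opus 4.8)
The plan is to work entirely inside the local structure around a pair of vertices at distance three and to extract the inequality from a double count, using the hypothesis $c_2>1$ to force overlap structure. First I would fix vertices $u$ and $w$ with $\dist(u,w)=3$; these exist because the diameter is at least four. Set
\[
	N := \Gam_1(u)\cap\Gam_2(w), \qquad M := \Gam_2(u)\cap\Gam_1(w).
\]
By the symmetric definition of $c_3$, both $|N|=c_3$ and $|M|=c_3$. I would first establish the key regularity of the bipartite graph between $M$ and $N$: for $v\in M$ we have $\dist(u,v)=2$, and each of its $c_2$ common neighbours with $u$ is forced (by the triangle inequality applied to $u,w$) to lie in $N$; since every vertex of $N$ is adjacent to $u$, these are exactly the neighbours of $v$ in $N$. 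Hence each $v\in M$ has exactly $c_2$ neighbours in $N$, and symmetrically each $z\in N$ has exactly $c_2$ neighbours in $M$.

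With this $c_2$-biregular incidence in hand, I would run the natural double count. Writing $A_v:=\Gam_1(u)\cap\Gam_1(v)\subseteq N$ for $v\in M$, the family $\{A_v\}_{v\in M}$ consists of $c_3$ subsets of the $c_3$-point set $N$, each of size $c_2$, with every point of $N$ covered exactly $c_2$ times. Counting pairs ``(block, pair of points inside it)'' and using that two vertices of $N$ share at most $c_2-1$ common neighbours in $M$ (because $u$ is always a common neighbour and $u\notin M$), one obtains, for pairs at distance two,
\[
	c_3\,\binom{c_2}{2}\;\le\;\binom{c_3}{2}\,(c_2-1),
\]
which simplifies to $c_3\ge c_2+1$. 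This already proves the lemma in the boundary case $c_2=2$, where $c_2+1=3c_2/2=3$; this is exactly the case realized with equality by the cube-like examples studied later in the paper.

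The main obstacle is that this elementary count is lossy: the bound $|A_v\cap A_{v'}|\le c_2-1$ on the overlap of two blocks is essentially best possible pairwise (two such $\mu$-graphs really can share $c_2-1$ vertices), so a Bonferroni estimate over several blocks does no better than $c_3\ge c_2+1$, whereas the target $c_3\ge 3c_2/2$ needs genuine savings when $c_2$ is large. To close the gap I would exploit $c_2>1$ more forcefully: if two vertices $z,z'$ of $N$ share common neighbours $v_1,\dots,v_t\in M$, then each pair $v_i,v_j$ shares the three vertices $w$, $z$, $z'$, so heavy overlap on the $N$-side propagates into, and overpopulates, the $\mu$-graphs on the $M$-side. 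This is also where the diameter-four hypothesis enters, guaranteeing that the surrounding neighbourhoods (for instance the neighbours of $w$ at distance four from $u$) are nondegenerate. Converting this propagation into an \emph{averaged}, rather than worst-case, overlap estimate is the crux of the argument, and this sharpened overlap analysis is the substance of the proof of Brouwer, Cohen and Neumaier.
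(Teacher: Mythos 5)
The paper itself does not prove this lemma: it is imported from Brouwer, Cohen and Neumaier (\cite[Thm~5.4.1]{BCN}) with only a citation, so there is no internal argument to measure yours against. Judged on its own terms, your proposal is not a complete proof. The setup is fine: with $\dist(u,w)=3$ the sets $N=\Gam_1(u)\cap\Gam_2(w)$ and $M=\Gam_2(u)\cap\Gam_1(w)$ both have size $c_3$, and the graph between them is $c_2$-regular on each side. Your double count then legitimately gives $c_3\ge c_2+1$, which settles the boundary case $c_2=2$ (where $c_2+1=3c_2/2$). But for $c_2\ge 3$ the target $c_3\ge 3c_2/2$ is strictly stronger than $c_2+1$, and your closing paragraph does not bridge that gap: it gestures at a mechanism (``heavy overlap on the $N$-side overpopulates the $\mu$-graphs on the $M$-side'') and then explicitly defers the decisive averaged overlap estimate to Brouwer, Cohen and Neumaier. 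Identifying where the saving must come from is not the same as producing it, so the lemma remains unproved exactly in the range where it has content.

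There is also a technical slip in the part you do carry out. Your bound of $c_2-1$ on the number of common neighbours in $M$ of a pair $z,z'\in N$ is justified only when $\dist(z,z')=2$. The lemma is stated for general distance-regular graphs, not just bipartite ones, and two vertices of $N$ can be adjacent, in which case they have $a_1$ common neighbours and the right bound is $a_1-1$, which need not be at most $c_2-1$. You would need to treat adjacent pairs in $N$ separately (or add a bipartiteness hypothesis, which is all the paper ever uses but is not what the lemma asserts). Finally, your appeal to the diameter-four hypothesis in the last paragraph is not backed by any concrete step; everything you actually use requires only the existence of vertices at distance three.
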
\hfill \qedsymbol

We conclude this section by stating an expression of the number of vertices in terms of the intersection numbers.
\begin{lemma}
\label{vertices}
If a bipartite distance-regular graph of diameter four with intersection numbers $k$, $c_2$ and $c_3$ has $2n$ vertices, then
\[
	n = \frac{k(k^2-(c_2+1)k+c_2(c_3 +1))}{c_2 c_3}.
\]
\end{lemma}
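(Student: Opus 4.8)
The plan is to count the vertices by summing the sizes $k_i$ of the distance classes $\Gam_i(v)$ for a fixed vertex $v$, exploiting the fact that these sizes are determined entirely by the intersection numbers through the recursion in Lemma \ref{feas}(i). Since $\Gam$ is bipartite, the colour class containing $v$ consists exactly of the vertices at even distance from $v$, and the other colour class consists of those at odd distance; each class has $n$ vertices. The strategy is therefore to compute $k_1$ and $k_3$ and to use $n = k_1 + k_3$, as summing the odd classes produces the least algebra.

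First I would record the relevant intersection numbers. As explained before the lemma, bipartiteness gives $a_i = 0$ and $b_i = k - c_i$ for all $i$, while $c_1 = 1$ by definition. Since the diameter is four, $\Gam_5(v) = \emptyset$, so $b_4 = 0$ and hence $c_4 = k$. These values, together with the given $c_2$ and $c_3$, pin down every entry needed in the recursion.

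Next I would iterate $k_i = k_{i-1} b_{i-1}/c_i$ starting from $k_0 = 1$. This yields $k_1 = k$, then $k_2 = k(k-1)/c_2$, and $k_3 = k(k-1)(k-c_2)/(c_2 c_3)$. Substituting into $n = k_1 + k_3$ and factoring out $k/(c_2 c_3)$ reduces the claim to verifying the polynomial identity $c_2 c_3 + (k-1)(k-c_2) = k^2 - (c_2+1)k + c_2(c_3+1)$, which follows by expanding the product on the left.

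The computation is entirely mechanical, so there is no genuine obstacle; the only point requiring care is the bookkeeping at the ends of the path, namely recognizing that diameter four forces $c_4 = k$ and that bipartiteness splits the distance classes cleanly into the two colour classes so that a single class already accounts for $n$. As a consistency check one could instead sum the even classes, using $n = 1 + k_2 + k_4$ with $k_4 = (k-1)(k-c_2)(k-c_3)/(c_2 c_3)$, and confirm that it collapses to the same expression.
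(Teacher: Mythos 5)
Your proof is correct and follows essentially the same route as the paper: both iterate the recursion $k_i = k_{i-1}b_{i-1}/c_i$ from Lemma \ref{feas}(i) with $b_i = k - c_i$. The only difference is a minor shortcut — you obtain $n$ as $k_1 + k_3$ (the odd colour class), whereas the paper computes all of $k_0,\dots,k_4$ and takes half the total; your identity $c_2c_3 + (k-1)(k-c_2) = k^2-(c_2+1)k+c_2(c_3+1)$ checks out.
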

\begin{proof}
Since $k_1=k$, by using the formulae in (i) from Lemma \ref{feas}, we deduce that
\[
	k_2 = \frac{k(k-1)}{c_2}, \quad k_3 = \frac{k(k-1)(k-c_2)}{c_2c_3}, \quad k_4 = \frac{(k-1)(k-c_2)(k-c_3)}{c_2c_3}.
\]
Since $2n = k_0+k_1+k_2+k_3+k_4$ and $k_0=1$, we have
\[
	n = \frac{1+k+k_2+k_3+k_4}{2} = \frac{k(k^2-(c_2+1)k+c_2(c_3 +1))}{c_2 c_3}.
\]
\end{proof}

\section{Constructing $\{0,\alpha\}$-Sets}

In this section, we present some distance-regular graphs that can be used to construct $\{0,\alpha\}$-sets using the construction described in Lemma \ref{constr}. By the symmetry of the eigenvalues of a bipartite graph, in order to construction $\{0,\alpha\}$-sets, we need graphs that have exactly five eigenvalues 0, $\pm \theta$ and $\pm k$; distance-regular graphs satisfying this condition are precisely those with diameter four. Bipartite distance-regular graphs of diameter four have been used to construct sets of mutually unbiased bases by Godsil and Roy \cite{GodRoy09}. We describe some examples of such graphs that apply to Lemma \ref{constr}. All the graphs described in this section give new constructions of $\{0,\alpha\}$-sets, except for the 8-cycle, which is a special case of the graphs used in \cite{GodRoy09}.

Let $X$ be a set with an element called ``zero''. The \emph{weight} of an element in $X^m$ is defined to be the number of its nonzero entries. The \emph{Hamming distance} between two elements in $X^m$ is the number of coordinates in which they differ. Let $\mathbb{F}_q$ be the finite field with $q$ elements. Then $\mathbb{F}_q^m$ is a vector space over $\mathbb{F}_q$. Let $C$ be a subspace of $\mathbb{F}_q^m$ such that every nonzero element in $C$ has weight at least two. The \emph{coset graph} of $C$ (with respect to $\mathbb{F}_q^m$) is the graph with vertex set being the set $\mathbb{F}_q^m/C$ of all cosets of $C$, such that two cosets are adjacent if and only if their difference can be represented by a weight-one element in $\mathbb{F}_q^m$.

\subsection{$8$-Cycle}
The $8$-cycle is the graph with vertex set $\mathbb{Z}_2 \times \mathbb{Z}_4$, such that $(x_1,y_1)$ and $(x_2,y_2)$ are adjacent if and only if $x_1 \neq x_2$ and $y_1-y_2 \in \{1,3\}$. The 8-cycle is bipartite, with $\{0\} \times \mathbb{Z}_4$ and $\{1\} \times \mathbb{Z}_4$ being the colour classes, and is the unique bipartite distance-regular graph of diameter four with intersection numbers $k=2$, $c_2=1$ and $c_3=1$. Its nontrivial eigenvalue is $\theta_1=\sqrt{2}$. The group $G:=(\mathbb{Z}_4,+)$ acts on the vertices of the 8-cycle by addition to the second coordinate (without changing the first), and each colour class is an orbit induced by the actions. Since $G$ is clearly a group of automorphisms of the graph, by Lemma \ref{constr}, we can construct a $\{0,1/2\}$-set of size 4 in $\mathbb{C}^2$, represented by the columns of the matrix
\[
	\frac{1}{\sqrt{2}}
	\begin{bmatrix}
	1 & i & 1 & i \\
	1 & -i & i & 1
	\end{bmatrix}.
\]
In fact, these vectors can be partitioned into two unbiased bases of $\mathbb{C}^2$, with a basis containing the first two columns. The construction of vectors from the 8-cycle has essentially been described in \cite{GodRoy09}, in which the 8-cycle is viewed as the incidence graph of the affine plane of order two having the lines with infinite slope removed.

\subsection{$4$-Cube}
The \emph{$4$-cube} is the graph with vertex set $\mathbb{Z}_2^4$, such that two vertices are adjacent if and only if they have Hamming distance 1. It is straightforward to check that the 4-cube is a bipartite distance-regular graph of diameter four having 16 vertices, with intersection numbers $k=4$, $c_2=2$, $c_3=3$ and nontrivial eigenvalue $\theta_1=2$. Moreover, it is the unique bipartite distance-regular graph of diameter four with $k=4$, $c_2=2$ and $c_3=3$; see \cite[Sect.~6.1]{BCN}. The odd-weight elements form a colour class and the even-weight ones form the other. The set of even-weight elements is an additive abelian group $G$ acting regularly on the colour classes by addition, and is clearly a group of automorphisms of the graph. Since $G$ is a subgroup of $\mathbb{Z}_2^4$, every non-identity element of it has order two. Therefore, using Lemma \ref{constr}, we can construct a $\{0,1/4\}$-set of size 8 in $\mathbb{R}^4$, represented by the columns of the matrix
\[
	\frac{1}{2}
	\begin{bmatrix}
	1 & 1 & 1 & 1 & 1 & 1 & 1 & 1 \\
	1 & 1 & -1 & -1 & 1 & 1 & -1 & -1 \\
	1 & -1 & 1 & -1 & 1 & -1 & 1 & -1 \\
	1 & -1 & -1 & 1 & -1 & 1 & 1 & -1
	\end{bmatrix}.
\]
Again, these vectors can be partitioned into two unbiased bases of $\mathbb{R}^4$, with a basis containing the first four columns.

\subsection{Folded $8$-Cube}
The \emph{folded $8$-cube} is the graph with vertex set $\mathbb{Z}_2^7$, such that two vertices are adjacent if any only if they have Hamming distance $1$ or $7$. It is straightforward to check that the folded 8-cube is a bipartite distance-regular graph of diameter four having 128 vertices, with intersection numbers $k=8$, $c_2=2$, $c_3=3$ and nontrivial eigenvalue $\theta_1=4$. Moreoever, it is the unique bipartite distance-regular graph of diameter four with $k=8$, $c_2=2$ and $c_3=3$; see \cite[Sect.~9.2D]{BCN}. The odd-weight elements form a colour class of the even-weight ones form the other. Similar to the 4-cube, the even-weight strings form an abelian group $G$ of graph automorphisms acting regularly on the colour classes by addition. Using Lemma \ref{constr}, we can construct a $\{0,1/4\}$-set of size 64 in $\mathbb{R}^8$ (since $G \leq \mathbb{Z}_2^7$).

\subsection{Van-Lint Schrijver Partial Geometry}

Let $C$ be the subspace of $\mathbb{F}_3^6$ spanned by the all-one vector. Then any coset of $C$ in $\mathbb{F}_3^6$ have its elements sharing the same coordinate sum (computed in $\mathbb{F}_3$). For $i \in \{0,1,2\}$, let $V_i$ be the set of cosets of $C$ whose elements have coordinate sum $i$. It is easy to check that $V_0$, $V_1$ and $V_2$ all have the same size, which is $3^4=81$. Let $\Gam'$ be the coset graph of $C$. Then $\Gam'$ is tripartite, with colour classes $V_0$, $V_1$ and $V_2$. Let $\Gam$ be the subgraph $\Gam$ of $\Gam'$ induced by $V_0$ and $V_1$. The incidence structure with point set $V_0$, line set $V_1$ and incidence graph $\Gam$ is called the \emph{van Lint-Schrijver partial geometry}; it was first introduced in \cite{LintSchrPG}, and is also discussed in \cite{LintCam82} and \cite[Sect.~11.5]{BCN}. The incidence graph $\Gam$ is a bipartite distance-regular graph of diameter four having 162 vertices, with intersection numbers $k=6$, $c_2=1$, $c_3=2$ and nontrivial eigenvalue $\theta_1=3$. See \cite[Sect.~11.5]{BCN}. The set $V_0$ is a subgroup of the abelian group $\mathbb{F}_3^6$; its action on $\mathbb{F}_3^6$ by addition induces orbits $V_0$, $V_1$ and $V_2$, so $V_0$ acts on the colour classes of $\Gam$ regularly. Moreover, addition on the vertices of $\Gam$ by an element of $V_0$ is clearly an automorphism of $\Gam$. Therefore, using Lemma \ref{constr}, we can construct a $\{0,1/4\}$-set of size 81 in $\mathbb{C}^6$.  

\subsection{Extended Binary Golay Code}

Let $I$ be the $12 \times 12$ identity matrix, and let A be the matrix
\[
	\begin{bmatrix} 
	0&1&1&1&1&1&1&1&1&1&1&1 \\ 
	1&1&1&0&1&1&1&0&0&0&1&0 \\
	1&1&0&1&1&1&0&0&0&1&0&1 \\
	1&0&1&1&1&0&0&0&1&0&1&1 \\
	1&1&1&1&0&0&0&1&0&1&1&0 \\
	1&1&1&0&0&0&1&0&1&1&0&1 \\
	1&1&0&0&0&1&0&1&1&0&1&1 \\
	1&0&0&0&1&0&1&1&0&1&1&1 \\
	1&0&0&1&0&1&1&0&1&1&1&0 \\
	1&0&1&0&1&1&0&1&1&1&0&0 \\
	1&1&0&1&1&0&1&1&1&0&0&0 \\
	1&0&1&1&0&1&1&1&0&0&0&1
	\end{bmatrix}.
\]
The \emph{extended binary Golay code} is the subspace of $\mathbb{F}_2^{24}$ generated by the rows of the matrix $[I|A]$. In the extended binary Golay code, any element has even weight (and in fact, weight that is divisible by 4), and any nonzero element has weight at least eight. For more details about Golay codes, see \cite{macsloane77} and \cite{vanLintGTM}. Let $C$ be the extended binary Golay code, and let $\Gam$ be the coset graph of $C$ (with respect to $\mathbb{F}_2^{24}$). Since the elements of $C$ have even weights, the weights of the elements in a coset of $C$ have the same parity; we call a coset of $C$ \emph{even} if its elements have even weights, and \emph{odd} otherwise. It then follows that $\Gam$ is bipartite, with the even cosets forming a colour class and the odd cosets forming the other. In fact, $\Gam$ is a distance-regular graph of diameter four having 4096 vertices, with intersection numbers $k=24$, $c_2=2$, $c_3=3$ and nontrivial eigenvalue $\theta_1 = 8$. Moreoever, $\Gam$ is the unique bipartite distance-regular graph of diameter four with $k=24$, $c_2=2$ and $c_3=3$; see \cite[Sect.~11.3D]{BCN}. The set $G$ of even cosets of $C$ is a subgroup of the quotient group $\mathbb{F}_2^{24}/C$, and it acts on each colour class of $\Gam$ regularly by addition. Note that $G$ acts on $\Gam$ as graph automorphisms. Since the non-identity elements of $G$ have order two, by Lemma \ref{constr}, we can construct a $\{0,1/9\}$-set of size 2048 in $\mathbb{R}^{24}$.

\subsection{Extended Kasami Codes}

Let $s$ and $t$ be powers of 2, with $t \leq s$, and let $F := \mathbb{F}_s$. Let $K(s,t)$ be the set of elements $x$ in $\mathbb{F}_2^F$ with even-weight that satisfy
\[
	\sum\limits_{\alpha \in F}{x_\alpha \alpha} = \sum\limits_{\alpha \in F}{x_\alpha \alpha^{t+1}} = 0.
\]
The subspace $K(s,t)$ of $\mathbb{F}_2^F$ is called the \emph{extended Kasami code} (with parameters $s$ and $t$) if one of the following two conditions is satisfied:
\begin{enumerate}
	\item[(i)] $s=q^{2j+1}$, $t=q^m$, with $q=2^i$, $m \leq j$, and $gcd(m,2j+1)=1$.
	\item[(ii)] $s=q^2$, $t=q$, with $q=2^i$.
\end{enumerate}
Every nonzero element in an extended Kasami code has weight at least four. Similar to the extended Golay code, the elements in a coset of $K(s,t)$ have the same parity, so the coset graph $\Gam(s,t)$ of $K(s,t)$ is bipartite, with the even cosets forming a colour class and the odd ones forming the other. In fact, $\Gam(s,t)$ is a distance-regular graph of diameter four having $2n$ vertices and $k$, $c_2$, $c_3$ as parameters, where
\begin{enumerate}
	\item[(i)] $(n,k,c_2,c_3) = (q^{4j+2},q^{2j+1},q,q^{2j}-1)$,
	\item[(ii)] $(n,k,c_2,c_3) = (q^3,q^2,q,q^2-1)$,
\end{enumerate}
corresponding to the order above. See \cite[Sect.~11.2]{BCN}. The nontrivial eigenvalue of $\Gam(s,t)$ is
\begin{enumerate}
	\item[(i)] $\theta_1=q^{j+1}$.
	\item[(ii)] $\theta_1=q$.
\end{enumerate}
As in the extended binary Golay code, the even cosets of $K(s,t)$ form an abelian group $G$ of automorphisms of $\Gam(s,t)$ that acts on each colour class of $\Gam(s,t)$ regularly by addition. Since every non-identity element of $G$ has order two, by Lemma \ref{constr}, we can construct a $\{0,\alpha\}$-set of size $n$ in $\mathbb{R}^{k}$, where, corresponding to the order above,
\begin{enumerate}
	\item[(i)] $(\alpha,n,k)=(q^{-2j},q^{4j+2},q^{2j+1})$.
	\item[(ii)] $(\alpha,n,k)=(q^{-2},q^3,q^2)$.
\end{enumerate}

\section{Flat Bounds}

In 1975, Delsarte, Goethals and Seidel \cite{DGS} proved bounds for sizes of sets of unit vectors with prescribed sets of angles. One type of the bounds they proved states the following when applied to $\{0,\alpha\}$-sets. 

\begin{theorem}[Delsarte, Goathals and Seidel]
\label{DGS bounds}
Let $S$ be a $\{0,\alpha\}$-set in $\mathbb{C}^m$ with $0<\alpha<1$. Then
\[
	|S| \leq \frac{(m+1)m^2}{2}.
\]
If $S \subseteq \mathbb{R}^m$ then
\[
	|S| \leq \frac{(m+2)(m+1)m}{6}.
\]
\end{theorem}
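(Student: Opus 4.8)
The plan is to prove both bounds by the standard linear-algebra (``absolute bound'') method: to each vector $x$ in $S$ I will attach a polynomial $p_x$ on the unit sphere that detects $x$ among the members of $S$, show these polynomials are linearly independent, and then bound their number by the dimension of a small space of polynomials that contains all of them. Writing the angle as $|x^*\xi|^2 = \xi^* x x^* \xi$, I define in the complex case
\[
	p_x(\xi) = (\xi^* x)\big(|x^*\xi|^2 - \alpha\big),
\]
and in the real case the analogous $p_x(\xi) = \langle x,\xi\rangle(\langle x,\xi\rangle^2 - \alpha)$. The $\{0,\alpha\}$ condition is exactly what makes $p_x$ vanish on the rest of $S$: if $y \in S$ and the angle between $x$ and $y$ is $\alpha$, the second factor vanishes, while if the angle is $0$ then $x^*y = 0$ and the first factor vanishes; meanwhile $p_x(x) = 1-\alpha \neq 0$ because $\alpha < 1$. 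Hence $p_x(y) = (1-\alpha)\,\delta_{xy}$ for all $x,y \in S$, so the matrix $[p_x(y)]_{x,y \in S}$ is a nonzero scalar multiple of the identity, and the $p_x$ are linearly independent as functions on the sphere.

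It remains to place all the $p_x$ inside a space whose dimension is the claimed bound, and the key step is to homogenize using $\|\xi\|^2 = 1$ on the sphere. In the real case $p_x(\xi) = \langle x,\xi\rangle^3 - \alpha\langle x,\xi\rangle$; replacing the linear term by $\langle x,\xi\rangle\,\|\xi\|^2$, which agrees with it on the sphere, exhibits $p_x$ as the restriction of a homogeneous polynomial of degree three in $\xi_1,\dots,\xi_m$. The space of such polynomials has dimension $\binom{m+2}{3} = (m+2)(m+1)m/6$, so the number of linearly independent $p_x$, and hence $|S|$, is at most this, giving the real bound.

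In the complex case the same homogenization turns $-\alpha(\xi^* x)$ into $-\alpha(\xi^* x)(\xi^*\xi)$ on the sphere, so that both terms of $p_x$ become polynomials of holomorphic degree one and antiholomorphic degree two in $\xi$; the space of all such bidegree-$(1,2)$ polynomials has dimension $m\binom{m+1}{2} = m^2(m+1)/2$, which yields the complex bound. I expect the main obstacle to be precisely this bookkeeping of degrees: one must choose the detecting polynomial so that, after substituting $\|\xi\|^2 = 1$, every term lands in a single, and as small as possible, homogeneous component. In the complex case the asymmetric bidegree $(1,2)$, obtained by multiplying the quadratic $|x^*\xi|^2-\alpha$ by the antiholomorphic linear form $\xi^* x$ rather than by a real linear form, is what makes the dimension count collapse to $m^2(m+1)/2$ instead of to the larger space one would get from a naive degree-four annihilator. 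Checking that restriction to the sphere does not destroy linear independence (it does not, since independence is already witnessed on the finite subset $S \subseteq S^{m-1}$) then completes the argument.
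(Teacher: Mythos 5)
Your proof is correct, but it is not the route the paper takes (or, more precisely, the route the paper points to: Theorem \ref{DGS bounds} is stated here without proof, citing Delsarte--Goethals--Seidel and the ``elementary tensor algebra'' proof of Calderbank--Cameron--Kantor--Seidel, and the latter is exactly the method the paper then adapts to prove the flat refinement in Theorem \ref{flatbds}). You use the classical polynomial annihilator argument: attach to each $x\in S$ the function $p_x(\xi)=(\xi^*x)(|x^*\xi|^2-\alpha)$, observe that $p_x(y)=(1-\alpha)\delta_{xy}$ on $S$ (using $\alpha<1$), and bound the number of linearly independent $p_x$ by the dimension of the bidegree-$(1,2)$ polynomials, $m\binom{m+1}{2}$, respectively the homogeneous cubics $\binom{m+2}{3}$ in the real case --- both counts check out and match the stated bounds. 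The paper's preferred mechanism instead forms the tensors $v_x=x\otimes x\otimes\overline{x}$, shows their Gram matrix $N^*N=(1-\beta^2)I+\beta^2 M^*M$ is positive definite (so $|S|=\operatorname{rank} N$), and bounds the dimension of $\operatorname{span}\{v_x\}$ inside $\operatorname{Sym}^2(\mathbb{C}^m)\otimes\overline{\mathbb{C}^m}$, which gives the same numbers; the two arguments are dual faces of the same linear algebra (your $p_x(\xi)$ equals $v_\xi^*v_x-\alpha\,\xi^*x$ on the sphere). What your version buys is that linear independence is immediate from the diagonal evaluation matrix, with no positive-definiteness needed; what the Gram-matrix version buys is that it localizes all the combinatorial content in the tensor coordinates, which is precisely what lets the paper exploit flatness (collapsing the coordinates $x_ix_j\overline{x_j}=x_i/m$) to sharpen the bound --- a refinement that is less transparent to extract from the polynomial formulation.
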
\hfill \qedsymbol

Another proof of this theorem using elementary tensor algebra was later given by Calderbank, Cameron, Kantor and Seidel \cite{CCKS} in 1997. Recall that a vector in $\mathbb{C}^m$ is called \emph{flat} if all of its entries have the same absolute value. It turns out that by using arguments similar to those by Calderbank, Cameron, Kantor and Seidel, the bounds in Theorem \ref{DGS bounds} can be improved if the $\{0,\alpha\}$-set contains only flat vectors.

\begin{theorem}
\label{flatbds}
Let $S$ be a $\{0,\alpha\}$-set of flat vectors in $\mathbb{C}^m$ with $0<\alpha<1$. Then
\[
	|S| \leq \frac{(m^2-m+2)m}{2}.
\]
If $S \subseteq \mathbb{R}^m$ then
\[
	|S| \leq \frac{(m^2-3m+8)m}{6}.
\]
\end{theorem}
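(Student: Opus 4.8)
The plan is to adapt the tensor-algebra argument of Calderbank, Cameron, Kantor and Seidel \cite{CCKS} that yields Theorem \ref{DGS bounds}, exploiting the flatness condition to cut down the ambient dimension. Recall how that argument goes: to each unit vector $x$ one associates a suitable symmetric tensor built from $x x^*$, and one shows that the resulting tensors, together with a small number of fixed ``correction'' tensors, are linearly independent in an appropriate space of Hermitian (or, in the real case, symmetric) matrices. The size of the $\{0,\alpha\}$-set is then bounded by the dimension of that space. Concretely, for a $\{0,\alpha\}$-set the natural object is something like $M_x := x x^* \otimes x x^*$ (or the symmetrization $\mathrm{Sym}^2(xx^*)$), living in the space of Hermitian operators on $\mathrm{Sym}^2(\mathbb{C}^m)$; the inner product $\langle M_x, M_y\rangle = |x^*y|^4$ takes only the two values $0$ and $\alpha^2$ on distinct pairs, which is exactly what forces near-orthogonality and hence independence.

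First I would set up the precise embedding. For each $x \in S$ I would define a Hermitian matrix $F_x$ acting on $\mathrm{Sym}^2(\mathbb{C}^m)$ of the form $F_x = c_1\,(xx^*)^{\otimes 2} + c_2\,(xx^* \otimes I + I \otimes xx^*) + c_3\, I\otimes I$ (symmetrized appropriately), choosing the constants $c_1, c_2, c_3$ so that $\langle F_x, F_y\rangle$ equals a fixed nonzero value when $|x^*y|^2 \in \{0,\alpha\}$ and equals something distinguishable on the diagonal. The point of this ``annihilator polynomial'' trick is to produce a Gram matrix of the $F_x$ that is a positive multiple of the identity, proving the $F_x$ are linearly independent; then $|S|$ is at most the dimension of the real vector space spanned by $\{F_x\}$. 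Second, and this is where flatness enters, I would observe that each $F_x$ lands in a proper subspace determined by the flat structure of $x$: if every entry of $x$ has modulus $1/\sqrt{m}$, then the diagonal of $xx^*$ is the constant vector $\tfrac1m \mathbf 1$, so $xx^*$ ranges over Hermitian matrices with constant diagonal. This constraint reduces the effective dimension of the span, and carrying the reduction through the tensor space is what produces the improved count $(m^2-m+2)m/2$ rather than $(m+1)m^2/2$.

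The key computation is therefore dimensional bookkeeping. I would compute the dimension of the space of Hermitian operators on $\mathrm{Sym}^2(\mathbb{C}^m)$ reachable by the $F_x$, subject to (a) the diagonal-of-$xx^*$ being constant and (b) whatever linear relations the fixed lower-order terms $c_2, c_3$ impose, and check that it equals $(m^2-m+2)m/2$. For the real case one replaces Hermitian by symmetric throughout and works over $\mathbb{R}^m$; the relevant symmetric-tensor dimensions shrink and, after imposing the flat (constant-diagonal) condition, should yield $(m^2-3m+8)m/6$. I would verify the two target numbers by expanding, e.g. writing the complex bound as $\binom{m+1}{2}\! \cdot\! m$-type expressions minus the codimension contributed by flatness, and similarly for the real bound using $\binom{m+2}{3}$-type terms.

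The main obstacle I expect is getting the flatness reduction to interact correctly with the symmetric-tensor construction: it is easy to write down the constant-diagonal constraint on $xx^*$ itself, but the $F_x$ live in a second symmetric power, and one must show that the constant-diagonal condition really does cut the span of the $F_x$ by exactly the amount needed, no more and no less. In particular I would have to rule out the possibility that the fixed correction terms already account for part of that codimension (which would undercount) and confirm that the genuine constraint is a single affine condition per vector rather than a family of independent ones. Establishing this precise codimension — and checking it gives $(m^2-m+2)m/2$ and $(m^2-3m+8)m/6$ exactly, with the constants $c_1,c_2,c_3$ chosen consistently in both the real and complex settings — is the delicate part; the linear-independence step via the annihilator polynomial is routine once the space is correctly identified.
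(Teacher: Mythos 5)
Your overall strategy---embed each $x$ into a tensor space, show the images are linearly independent via the two-valued inner products, and bound $|S|$ by the dimension of their span, with flatness shrinking that span---is the right one and is the paper's strategy too. But there is a genuine gap in the choice of tensor. You work with the degree-$(2,2)$ object $F_x = c_1(xx^*)^{\otimes 2} + c_2(xx^*\otimes I + I\otimes xx^*) + c_3 I\otimes I$, whose pairwise inner products are quadratic polynomials in $|x^*y|^2$. That is the machinery of the \emph{absolute} bound $\binom{m+1}{2}^2$; it does not exploit the fact that $0$ is one of the two angles, and it cannot reach the cubic bounds claimed. Even after imposing flatness, the span of $\{(xx^*)^{\otimes 2}\}$ remains quartic in $m$: in the real case the entries of $(xx^T)^{\otimes 2}$ are $m^{-2}\epsilon_i\epsilon_j\epsilon_k\epsilon_l$ with $\epsilon_i=\pm1$, which are indexed (after all collapses from symmetry and from $\epsilon_i^2=1$) by the even subsets of $\{1,\dots,m\}$ of size at most $4$, giving dimension $1+\binom{m}{2}+\binom{m}{4}$. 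For $m=8$ this is $99$, strictly larger than the target $8+\binom{8}{3}=64$, and the gap only widens with $m$. The lower-order correction terms can only enlarge the span, so no choice of $c_1,c_2,c_3$ rescues this. The difficulty you flag at the end (whether flatness cuts the codimension ``by exactly the right amount'') is therefore not the real obstacle; the obstacle is that you are one tensor degree too high.

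What the paper does instead is use the ``half-degree'' object $v_x := x\otimes x\otimes\overline{x}$ (and $v_x=x^{\otimes 3}$ in the real case), for which $v_x^*v_y=(x^*y)\,|x^*y|^2$. Linear independence needs no annihilator polynomial: with $\beta=\sqrt{\alpha}<1$ the Gram matrix is $I+\beta^3C=(1-\beta^2)I+\beta^2(I+\beta C)$, a sum of a positive definite and a positive semidefinite matrix (the latter because $I+\beta C=M^*M$), hence positive definite. Flatness then collapses coordinates cleanly at the cubic level: all entries of $v_x$ indexed by $(i,j,j)$ or $(j,i,j)$ equal $x_i/m$, contributing only $m$ dimensions, and the remaining index types give $m(m-1)+\tfrac{1}{2}m(m-1)(m-2)$ in the complex case (respectively $\binom{m}{3}$ in the fully symmetric real case), which is exactly $\tfrac{1}{2}m(m^2-m+2)$ (respectively $\tfrac{1}{6}m(m^2-3m+8)$). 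To repair your argument you would need to replace $\mathrm{Sym}^2(xx^*)$ by this degree-$(2,1)$ tensor, i.e., use the factorization $t(t-\alpha)$ of the annihilator only ``halfway,'' keeping a single unconjugated factor $x^*y$.
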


\begin{proof}
Let $\beta:=\sqrt{\alpha}$. Then for distinct $x$ and $y$ in $S$, we have $|x^* y| \in \{0,\beta\}$. Let $M$ be the matrix whose columns are the vectors in $S$. Then
\[
	M^* M = I+\beta C,
\]
where $C$ is a Hermitian matrix with zero diagonal that has absolute value 0 or 1 for all off-diagonal entries. For each $x \in S$, let
\[
	v_x := x \otimes x \otimes \overline{x}
\]
be a tensor product, and let $S':=\{v_x: x \in S\}$. For more details about tensor algebra, see \cite{RomanLA}. If we let $N$ be the matrix whose columns are the vectors in $S'$, then since 
\[
	v_x^* v_y = (x \otimes x \otimes \overline{x})^* (y \otimes y \otimes \overline{y}) = (x^* y)^2 (\overline{x^* y}) = (x^* y) |x^* y|^2,
\]
we have
\[
	N^* N = I + \beta^3 C = (1-\beta^2)I + \beta^2(I+\beta C).
\]
Since $|\beta|<1$, the matrix $(1-\beta^2)I$ is positive definite. Since $I+\beta C = M^* M$ is positive semidefinite, so is $\beta^2(I+\beta C)$. Hence $N^* N$ is positive definite and has full rank $|S|$. On the other hand, the rank of $N^* N$ is equal to the rank of $N$, which is the dimension of span$(S')$. Let $x \in S$. Since $x$ is a flat unit vector in $\mathbb{C}^m$, each of its entries has absolute value equal $1/\sqrt{m}$. Consider indices of $v_x$ that have forms $(i,j,j)$ or $(j,i,j)$. The entries of $v_x$ corresponding to these indices are
\[
	x_i x_j \overline{x_j} = \frac{x_i}{m},
\]
which depends only on $x_i$ (since $m$ is a constant). If an index of $v_x$ does not have one of the forms above, then it either has form $(i,i,j)$ or is equal to $(i,j,k)$ for some distinct $i$, $j$ and $k$. There are $m(m-1)$ indices of the first type, and there are $m(m-1)(m-2)$ indices of the second type. Since there are $m$ ways to choose $i$ for the indices $(i,j,j)$ and $(j,i,j)$, and since the entry of $v_x$ indexed by $(i,j,k)$ is equal to that by $(j,i,k)$, it follows that the dimension of span$(S')$ is at most
\[
	m + m(m-1) + \frac{m(m-1)(m-2)}{2} = \frac{m(m^2-m+2)}{2}.
\]
This is an upper bound for the rank of $N^* N$, which is equal to $|S|$, so
\[
	|S| \leq \frac{m(m^2-m+2)}{2},
\]
proving the first bound. Now assume further that $S \subseteq \mathbb{R}^m$. Then any $x \in S$ has entries equal to $1/\sqrt{m}$ or $-1/\sqrt{m}$, and
\[
	v_x = x \otimes x \otimes x.
\]
In this case, the entries of $v_x$ corresponding to the indices of the forms $(i,j,j)$, $(j,i,j)$ or $(j,j,i)$ are all equal to $x_i/m$. Since the entry of $v_x$ indexed by $(i,j,k)$ is invariant under permutations of the components $i$, $j$ and $k$, the dimension of span$(S')$ is at most
\[
	m + \binom{m}{3} = \frac{m(m^2-3m+8)}{6},
\]
proving the second bound
\[
	|S| \leq \frac{m(m^2-3m+8)}{6}.
\]
\end{proof}

Note that Godsil and Roy's construction in Lemma \ref{constr} yields sets of flat vectors, so the bounds proven above are applicable to the $\{0,\alpha\}$-sets constructed using Godsil and Roy's method. In fact, there are distance-regular graphs that produce $\{0,\alpha\}$-sets of optimal sizes with respect to these bounds. In particular, it is easy to check that the 4-cube, the folded 8-cube and the coset graph of the extended binary Golay code produce sets that meet the flat real bound, while the 8-cycle produces a set that meets the flat complex bound, all at equality. The related parameters for these graphs are summarized in Table 1.

\begin{table}[ht]
\caption{$\{0,\alpha\}$-sets of largest size $n$ from graphs.}
\centering
\begin{tabular}{c c c c c c c} 
\hline \noalign{\vskip 0.3ex} 
Graph & $k$ & $c_2$ & $c_3$ & $\alpha$ & $n$ & Space \\ [0.3ex] 
\hline \noalign{\vskip 1ex}
4-cube & 4 & 2 & 3 & 1/4 & 8 & $\mathbb{R}^4$ \\
Folded 8-cube & 8 & 2 & 3 & 1/4 & 64 & $\mathbb{R}^8$ \\
Ext. binary Golay code & 24 & 2 & 3 & 1/9 & 2048 & $\mathbb{R}^{24}$ \\
8-cycle & 2 & 1 & 1 & 1/2 & 4 & $\mathbb{C}^2$ \\[1ex] 
\hline 
\end{tabular}
\end{table}

\section{Optimal Graphs}

At the end of the previous section, we saw some graphs that produce $\{0,\alpha\}$-sets with the largest sizes. In this section, we shall see that those are the only distance-regular graphs that produce largest $\{0,\alpha\}$-sets.

\begin{theorem}
Let $S$ be a $\{0,\alpha\}$-set in $\mathbb{R}^k$ constructed from a distance-regular graph $\Gam$ using the construction in Lemma \ref{constr}. Then $S$ has the largest size $(k^2-3k+8)k/6$ if and only if $\Gam$ is one of the following graphs:
\begin{itemize}
	\item[(i)] $4$-cube.
	\item[(ii)] Folded $8$-cube.
	\item[(iii)] Coset graph of the extended binary Golay code.
\end{itemize}
\end{theorem}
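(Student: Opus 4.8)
The plan is to show that the construction forces $\Gamma$ to be a bipartite distance-regular graph of diameter four, translate the equality $|S|=(k^2-3k+8)k/6$ into a single polynomial condition on the intersection numbers $k,c_2,c_3$, solve that condition, and then cut the resulting family down to three graphs using two consequences of the hypothesis that $S$ lives in $\mathbb{R}^k$. First I would record the setup. Since $S$ is a $\{0,\alpha\}$-set with $0<\alpha<1$ obtained from Lemma \ref{constr}, the graph has exactly the five eigenvalues $\pm k,\pm\theta_1,0$, so $\Gamma$ is a bipartite distance-regular graph of diameter four with nontrivial eigenvalue $\theta_1=\sqrt{k+c_2(k-c_3-1)}$ and $\alpha=\theta_1^2/k^2$. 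Because $S\subseteq\mathbb{R}^k$, the regular abelian group $G$ in the construction has only real characters, hence exponent two; this yields two facts I will use repeatedly: (a) $n=|G|=2^r$ is a power of two, and (b) each vector of $S$ is flat with entries $\pm 1/\sqrt{k}$, so every inner product is an integer multiple of $1/k$ and therefore $\alpha=(j/k)^2$ for some integer $j$, equivalently $\theta_1\in\mathbb{Z}$.

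By Lemma \ref{vertices}, $|S|=n=k(k^2-(c_2+1)k+c_2(c_3+1))/(c_2c_3)$, so equality with the flat real bound $(k^2-3k+8)k/6$ of Theorem \ref{flatbds} (taking $m=k$) is equivalent, after clearing denominators and cancelling $k>0$, to the equation
\[
(6-c_2c_3)k^2+(3c_2c_3-6c_2-6)k+2c_2(3-c_3)=0,
\]
which I will call $(\star)$. A direct substitution shows $k=1$ is always a root of $(\star)$. When $c_2c_3=6$ the quadratic degenerates: the case $(c_2,c_3)=(2,3)$ makes $(\star)$ the identity $0=0$, valid for every $k$, while $(c_2,c_3)=(1,6)$ leaves only $k=1$. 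When $c_2c_3\neq 6$ I would factor out $(k-1)$ and read off the remaining root $k^{\ast}=2c_2(c_3-3)/(c_2c_3-6)$ by Vieta. Imposing feasibility $1\le c_2\le c_3<k$ (the strict inequality because $k_4>0$ forces $k>c_3$) together with Lemma \ref{c2bound}, a short sign analysis of $k^{\ast}-c_3$ shows $k^{\ast}\le c_3$ for all admissible $(c_2,c_3)$ except $(c_2,c_3)=(1,7)$, which gives $k^{\ast}=8$. Hence the only parameter sets satisfying $(\star)$ are the infinite family $(k,c_2,c_3)=(k,2,3)$ with $k\ge 4$ and the isolated triple $(8,1,7)$.

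The two reality constraints now finish the classification. The triple $(8,1,7)$ has $\theta_1^2=8$, so $\theta_1\notin\mathbb{Z}$; by (b) it cannot produce a real set and is discarded. For the family $(k,2,3)$ the bound value equals $n=k(k^2-3k+8)/6$, and imposing (a) gives $k(k^2-3k+8)=3\cdot 2^{r+1}$. Since the right-hand side has only the prime divisors $2$ and $3$, with $3$ to the first power, $k\in\{2^a,3\cdot 2^a\}$, and a $2$-adic valuation argument (using $\gcd(k,k^2-3k+8)=\gcd(k,8)$ and comparing the $2$-adic valuations of the three terms $k^2,-3k,8$) rules out all $a\ge 4$ and, after checking the small exponents directly, forces $k\in\{4,8,24\}$. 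Finally, the uniqueness statements quoted in the excerpt identify the bipartite distance-regular graphs of diameter four with $(k,c_2,c_3)$ equal to $(4,2,3)$, $(8,2,3)$, $(24,2,3)$ as exactly the $4$-cube, the folded $8$-cube, and the coset graph of the extended binary Golay code. The converse direction, that each of these three actually attains the bound, is already verified in the discussion following Theorem \ref{flatbds}.

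I expect the main obstacle to be the final number-theoretic step: proving that $k(k^2-3k+8)/6$ is a power of two only for $k\in\{4,8,24\}$. This is where care is needed, since $(\star)$ alone admits the entire family $(k,2,3)$; the valuation argument is the cleanest route, but one must treat $k=2^a$ and $k=3\cdot 2^a$ separately and dispose of the small exponents by hand. The other delicate point, more conceptual than technical, is recognising that flatness over $\mathbb{R}$ forces $\theta_1$ to be an integer, which is precisely what eliminates the stray solution $(8,1,7)$ that the divisibility condition (a) cannot remove.
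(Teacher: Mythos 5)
Your proposal is correct and, up to the point where the parameter sets are pinned down to $(k,2,3)$ or $(8,1,7)$, follows the same strategy as the paper: reduce to a bipartite distance-regular graph of diameter four, equate Lemma \ref{vertices} with the flat real bound to get the quadratic in $k$, and eliminate $(8,1,7)$ by observing that flat real vectors only realize angles $(j/k)^2$, which is incompatible with $\theta_1=\sqrt{8}$. Your handling of the quadratic is a cleaner variant: the paper applies the quadratic formula and then splits into $c_2=1$ (a divisibility argument giving $k=8$, $c_3=7$) and $c_2\geq 2$ (a $\rho=k/c_2$ manipulation plus Lemma \ref{c2bound} giving a contradiction), whereas you observe that $k=1$ is always a root, read off the other root by Vieta, and compare it with $c_3$; both routes use Lemma \ref{c2bound} and $c_3<k$ in the same way, and your ``short sign analysis'' does check out when written out. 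The genuine divergence is the last step: the paper simply cites Brouwer--Cohen--Neumaier for the fact that a bipartite distance-regular graph of diameter four with $c_2=2$, $c_3=3$ has $k\in\{4,8,24\}$, while you derive $k\in\{4,8,24\}$ from the extra hypothesis that the construction produces a real set, via $n=|G|$ being a power of two and a $2$-adic analysis of $k(k^2-3k+8)=6n$. Your argument is weaker as a statement about graphs but suffices for this theorem and is self-contained where the paper leans on a nontrivial external classification; I verified the valuation argument ($k=2^a$ with $a\geq 4$ forces $2^{2a}-3\cdot 2^a-16=0$, impossible, and $k=3\cdot 2^a$ with $a\geq 4$ forces $9\cdot 2^{2a}-9\cdot 2^a=0$, impossible, with the small cases $k\in\{4,6,8,12,24\}$ checked directly). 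One point you should justify rather than assert: the claim that $S\subseteq\mathbb{R}^k$ forces $G$ to have exponent two (hence $n=2^r$) only follows because the characters are real on $D$ and $D$ generates $G$; the latter needs the connectedness of $\Gamma$ (the subgraph induced on the $\langle D\rangle$-orbits of $y$ and $z$ is closed under taking neighbours). Both steps are true, but neither is immediate from Lemma \ref{constr} as stated. You still need the BCN uniqueness statements for the parameter sets $(4,2,3)$, $(8,2,3)$, $(24,2,3)$ to name the three graphs, as you note, and the converse direction is indeed covered by Table 1.
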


\begin{proof}
The graph $\Gam$ has to be bipartite and have diameter four. Let $k$, $c_2$ and $c_3$ be its intersection numbers, and let $2n$ be the number of its vertices. By Lemma \ref{constr} and Lemma \ref{vertices},
\[
	n = \frac{k(k^2-(c_2+1)k+c_2(c_3 +1))}{c_2 c_3}.
\]
Since $|S|=(k^2-3k+8)k/6$, we have
\[
	\frac{k(k^2-(c_2+1)k+c_2(c_3 +1))}{c_2 c_3} = \frac{(k^2-3k+8)k}{6},
\]
which, after rearrangement, yields
\begin{equation}
\label{realtight2}
	(6-c_2c_3) k^2 - (6c_2+6-3c_2c_3)k + (6c_2-2c_2c_3) = 0.
\end{equation}
Suppose $6-c_2c_3=0$. Then (\ref{realtight2}) becomes
\[
	(6c_2-12)k + (6c_2-12) = 0.
\]
Since $c_2 \leq c_3$ by Lemma \ref{feas}, we have $(c_2,c_3)=(1,6)$ or $(c_2,c_3)=(2,3)$. The case where $c_2=1$ is impossible, since $k=-1$ has to be true for the above equation to hold. On the other hand, $(c_2,c_3)=(2,3)$ satisfies (\ref{realtight2}) no matter what $k$ is equal to. Now suppose $6-c_2c_3 \neq 0$. Applying the quadratic formula to (\ref{realtight2}) yields
\[
	k = \frac{6c_2+6-3c_2c_3 \pm (6c_2-6-c_2c_3)}{12-2c_2c_3}.
\]
The case corresponding to the minus sign is discarded, since in that case $k=1$, which is impossible for a distance-regular graph of diameter four. Hence
\begin{equation}
\label{realtight3}
	k = \frac{6c_2+6-3c_2c_3+6c_2-6-c_2c_3}{12-2c_2c_3} = \frac{6c_2-2c_2c_3}{6-c_2c_3}.
\end{equation}
If $c_2=1$ then rewriting (\ref{realtight3}) yields
\[
	c_3 = \frac{6(k-1)}{k-2}.
\]
Since $k-1$ and $k-2$ are coprime, $k-2$ must divide 6, whence $k \in \{3,4,5,8\}$. Since $c_3<k$, it follows $k=8$, whence $c_3=7$. If $c_2 \geq 2$ then let $\rho:=k/c_2$. By (\ref{realtight3}) we have
\[
	\rho = \frac{6-2c_3}{6-c_2c_3},
\]
which implies
\[
	\frac{\rho c_2 - 2}{\rho - 1} = \frac{6}{c_3}.
\]
Since $\rho-1>0$,
\[
	\frac{6}{c_3} = \frac{\rho c_2 - 2}{\rho - 1} \geq \frac{2\rho - 2}{\rho - 1} = 2,
\]
whence $c_3 \leq 3$. Since $c_2 \geq 2$, Lemma \ref{c2bound} implies that $c_2=2$ and $c_3=3$. But this contradictions our assumption that $6-c_2c_3 \neq 0$.

The arguments above showed that, if $|S|=(k^2-3k+8)k/6$ then one of the following two conditions holds:
\begin{itemize}
	\item[(i)] $c_2=2$ and $c_3=3$.
	\item[(ii)] $k=8$, $c_2=1$ and $c_3=7$.
\end{itemize}
However, graphs with $(k,c_2,c_3)=(8,1,7)$ do not produce $\{0,\alpha\}$-sets in $\mathbb{R}^8$. Indeed, such a graph has nontrivial eigenvalue $\theta_1=\sqrt{8}$ and hence $\alpha=1/8$. On the other hand, any flat unit vectors in $\mathbb{R}^8$ must have entries $\pm 1/\sqrt{8}$, and the only possible angles between such vectors are 9/16, 1/4, 1/16 and 0. So the real flat bound is not applicable to the graphs with $(k,c_2,c_3)=(8,1,7)$. Meanwhile, Brouwer, Cohen and Neumaier \cite[Sect.~4.3, p.153-154]{BCN} showed that any bipartite distance-regular graph of diameter four with $c_2=2$ and $c_3=3$ satisfies $k \in \{4,8,24\}$. These graphs are precisely the 4-cube, the folded 8-cube and the coset graph of the extended binary Golay code.
\end{proof}

The case for $\mathbb{C}^k$ is proven similarly.

\begin{theorem}
Let $S$ be a $\{0,\alpha\}$-set in $\mathbb{C}^k$ constructed from a distance-regular graph $\Gam$ using the construction in Lemma \ref{constr}. Then $S$ has the largest size $(k^2-k+2)k/2$ if and only if $\Gam$ is the 8-cycle.
\end{theorem}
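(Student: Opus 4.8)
The plan is to mirror the argument of the preceding theorem, replacing the real flat bound by the complex one from Theorem~\ref{flatbds}. First I would record that, as in every construction in this paper, $\Gam$ must be bipartite of diameter four, so its whole intersection array is captured by $k$, $c_2$ and $c_3$. By Lemma~\ref{constr} the set $S$ has exactly $n$ vectors, and by Lemma~\ref{vertices},
\[
	n = \frac{k(k^2-(c_2+1)k+c_2(c_3 +1))}{c_2 c_3}.
\]
Setting this equal to the complex flat bound $(k^2-k+2)k/2$, cancelling the common factor $k>0$, and clearing the denominator $2c_2c_3$, I would rearrange to the quadratic
\[
	(2-c_2c_3)k^2 - (2c_2+2-c_2c_3)k + 2c_2 = 0,
\]
the exact analogue of equation~(\ref{realtight2}).

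Next I would split on the leading coefficient. If $2-c_2c_3=0$, then $c_2\le c_3$ forces $(c_2,c_3)=(1,2)$, and the surviving linear equation gives $k=1$, which is impossible for a distance-regular graph of diameter four. If $2-c_2c_3\neq 0$, the quadratic formula applies, and the one computational step requiring a little care is to observe that the discriminant is a perfect square,
\[
	(2c_2+2-c_2c_3)^2 - 8c_2(2-c_2c_3) = (c_2c_3+2c_2-2)^2.
\]
This collapses the two roots to $k=1$ (discarded, as before) and
\[
	k = \frac{2c_2}{2-c_2c_3}.
\]

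Finally I would extract the parameters. Since $k\ge 2$ and $c_2\ge 1$, positivity of this last expression forces $2-c_2c_3>0$; as $c_2c_3$ is a positive integer different from $2$, this forces $c_2c_3=1$, hence $c_2=c_3=1$ and $k=2$. Notably, unlike the real case, positivity alone pins down the parameters, so Lemma~\ref{c2bound} is not needed and there is no spurious parameter set (the analogue of $(k,c_2,c_3)=(8,1,7)$) to eliminate. The array $(2,1,1)$ is that of the $8$-cycle, the unique bipartite distance-regular graph of diameter four with those intersection numbers; its nontrivial eigenvalue $\theta_1=\sqrt 2$ gives $\alpha=\theta_1^2/k^2=1/2\in(0,1)$, so the complex flat bound genuinely applies. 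Conversely, the explicit $8$-cycle construction described earlier produces a set of size $4=(2^2-2+2)\cdot 2/2$, meeting the bound at equality. The only real obstacle is spotting the perfect-square factorization of the discriminant; once that is in hand, the rest is routine and entirely parallel to the real-case proof.
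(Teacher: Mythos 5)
Your proposal is correct and follows essentially the same route as the paper: derive the quadratic $(2-c_2c_3)k^2-(2c_2+2-c_2c_3)k+2c_2=0$, dispose of the degenerate leading-coefficient case, take the positive root $k=2c_2/(2-c_2c_3)$, and conclude $c_2=c_3=1$, $k=2$ from positivity and integrality. Your explicit verification that the discriminant is the perfect square $(c_2c_3+2c_2-2)^2$ is just the computation the paper performs implicitly when writing the $\pm$ term.
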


\begin{proof}
The graph $\Gam$ has to be bipartite and have diameter four. Let $k$, $c_2$ and $c_3$ be its intersection numbers. If $|S|=(k^2-k+2)k/2$ then by Lemma \ref{constr} and Lemma \ref{vertices},
\[
	\frac{k(k^2-(c_2+1)k+c_2(c_3 +1))}{c_2 c_3} = \frac{(k^2-k+2)k}{2},
\]
which is equivalent to
\begin{equation}
\label{comptight1}
	(2-c_2c_3) k^2 - (2c_2+2-c_2c_3)k + 2c_2 = 0.
\end{equation}
If $2-c_2c_3=0$ then $c_2=1$, $c_3=2$ and $k=1$, which is impossible, so $2-c_2c_3 \neq 0$. Applying the quadratic formula to (\ref{comptight1}) yields
\[
	k = \frac{2c_2+2-c_2c_3 \pm (2c_2+c_2c_3-2)}{4-2c_2c_3}.
\]
The case with the minus sign is disgarded, for otherwise $k=1$, which is impossible. Therefore
\[
	k = \frac{2c_2+2-c_2c_3+2c_2+c_2c_3-2}{4-2c_2c_3} = \frac{2c_2}{2-c_2c_3}.
\]
The fact that $k$, $c_2$ and $c_3$ are all positive integers then implies that
\[
	c_2 = c_3 = 1 \quad \text{and} \quad k=2.
\]
The unique bipartite distance-regular graph of diameter four with $k=2$, $c_2=1$ and $c_3=1$ is the 8-cycle.
\end{proof}

\noindent \textbf{Acknowledgment} \\

I thank Chris Godsil and Aidan Roy for many helpful discussions on the subject of complex lines.

\bibliographystyle{plain}
\bibliography{mybib}

\begin{thebibliography}{10}

\bibitem{BiggsAG2}
N.~Biggs.
\newblock {\em Algebraic Graph Theory}.
\newblock Cambridge University Press, 2nd edition, 1993.

\bibitem{BCN}
A.~E. Brouwer, A.~M. Cohen, and A.~Neumaier.
\newblock {\em Distance-Regular Graphs}.
\newblock Springer-Verlag, Berlin, 1989.

\bibitem{CCKS}
A.~R. Calderbank, P.~J. Cameron, W.~M. Kantor, and J.~J. Seidel.
\newblock {$Z_4$}-{K}erdock codes, orthogonal spreads, and extremal {E}uclidean
  line-sets.
\newblock {\em Proc. London Math. Soc. (3)}, 75(2):436--480, 1997.

\bibitem{LintCam82}
P.~J. Cameron and J.~H.~van Lint.
\newblock On the partial geometry pg(6, 6, 2).
\newblock {\em J. Comb. Theory, Ser. A}, 32(2):252--255, 1982.

\bibitem{DGS}
P.~Delsarte, J.~M. Goethals, and J.~J. Seidel.
\newblock Bounds for systems of lines, and {J}acobi polynomials.
\newblock {\em Philips Research Reports}, 30:91--105, 1975.

\bibitem{AlgCombCG}
C.~Godsil.
\newblock {\em Algebraic Combinatorics}.
\newblock Chapman and Hall, 1993.

\bibitem{GodRoy09}
C.~Godsil and A.~Roy.
\newblock Equiangular lines, mutually unbiased bases, and spin models.
\newblock {\em European J. Combin.}, 30(1):246--262, 2009.

\bibitem{Hoggar98}
S.~G. Hoggar.
\newblock {$64$} lines from a quaternionic polytope.
\newblock {\em Geom. Dedicata}, 69(3):287--289, 1998.

\bibitem{KlapRott04}
A.~Klappenecker and M.~R{\"o}tteler.
\newblock Constructions of mutually unbiased bases.
\newblock In {\em Finite fields and applications}, volume 2948 of {\em Lecture
  Notes in Comput. Sci.}, pages 137--144. Springer, Berlin, 2004.

\bibitem{Konig99}
H.~K{\"o}nig.
\newblock Cubature formulas on spheres.
\newblock In {\em Advances in multivariate approximation
  ({W}itten-{B}ommerholz, 1998)}, volume 107 of {\em Math. Res.}, pages
  201--211. Wiley-VCH, Berlin, 1999.

\bibitem{vanLintGTM}
J.~H.~van Lint.
\newblock {\em Introduction to coding theory}.
\newblock Graduate texts in mathematics 86. Springer, 3rd edition, 1999.

\bibitem{LintSchrPG}
J.~H.~van Lint and A.~Schrijver.
\newblock Construction of strongly regular graphs, two-weight codes and partial
  geometries by finite fields.
\newblock {\em Combinatorica}, 1(1):63--73, 1981.

\bibitem{macsloane77}
F.~J. MacWilliams and N.~J.~A. Sloane.
\newblock {\em The Theory of Error Correcting Codes}.
\newblock North Holland Publishing Co., 1977.

\bibitem{RomanLA}
S.~Roman.
\newblock {\em Advanced Linear Algebras}.
\newblock Graduate texts in mathematics. Springer, 3rd edition, 2008.

\bibitem{RoyThesis}
A.~Roy.
\newblock {\em Complex Lines with Restricted Angles}.
\newblock PhD thesis, University of Waterloo, 2005.
\newblock \url{http://arxiv.org/abs/1306.0978}.

\end{thebibliography}

\end{document}